\newcommand{\rsp}{\raisebox{0em}[2.7ex][1.3ex]{\rule{0em}{2ex} }}
\newtheorem{lem}{Lemma}
\newtheorem{prop}[lem]{Proposition}
\newtheorem{thm}[lem]{Theorem}
\newcommand{\Z}{{\mathbb Z}}
\newcommand{\Q}{{\mathbb Q}}
\newcommand{\Cl}{{\operatorname{Cl}}}
\newcommand{\gen}{{\operatorname{gen}}}
\newcommand{\eps}{\varepsilon}
\title{Octic Hilbert $2$-class fields of $\Q(\sqrt{2p}\,)$}
\author{Franz Lemmermeyer}
\email{franz.lemmermeyer@gmx.de}
\begin{document}

\maketitle

In this article we explain how to construct cyclic octic unramfied
extensions of the real quadratic number field $k = \Q(\sqrt{2p}\,)$, 
where $p \equiv 1 \bmod 8$ is
a prime number such that $h_2(k) \equiv 0 \bmod 8$. This can be done
by suitably modifying the construction over $\Q(\sqrt{-p}\,)$ given in
\cite{LemD}.

\section{Construction of Hilbert class fields of $\Q(\sqrt{2p}\,)$}\label{HC2}

We begin by collecting elementary results on the $2$-class group  of
$k = \Q(\sqrt{2p}\,)$ for primes $p \equiv 1 \bmod 4$.

\subsection*{Quadratic number fields $\Q(\sqrt{2p}\,)$}

Let $p \equiv 1 \bmod 4$ be a prime number. The number fields
$k = \Q(\sqrt{2p}\,)$ with $p \equiv 5 \bmod 8$ have class number
$h \equiv 2 \bmod 4$, and the Hilbert class field of $k$ coincides with
the genus class field $k_\gen = \Q(\sqrt{2}, \sqrt{p}\,)$.
If $p \equiv 1 \bmod 8$ then $d = 8 \cdot p$ is a $C_4$-factorization,
hence there exists a cyclic quartic extension $K/k$ unramified at all
finite primes. The construction of these fields is well known:
Write $p = e^2 - 2f^2$ with $e + f \sqrt{2} \equiv 1$ or
$\equiv 3 + 2\sqrt{2} \bmod 4$; then $K = k(\sqrt{e+f\sqrt{2}}\,)$.

\begin{table}[ht!]
  $$ \begin{array}{r|rr|cc}
    \rsp  p  & e & f & h & h^+  \\ \hline
     17 &  -5 &  2  & 2 &  4  \\
     73 &  -9 &  2  & 2 &  4  \\
     89 & -17 & 10  & 2 &  4  \\
     97 &  13 &  6  & 2 &  4  \\
    193 & -29 & 18  & 2 &  4     
  \end{array} \qquad \hskip 1cm
  \begin{array}{r|rr|cc}
    \rsp  p  & e & f & h & h^+  \\ \hline
    41 &  7 & 2  & 4 &  4  \\
    113 &  11 & 2  & 8 &  8  \\
    137 &  23 & 14  & 4 &  4  \\
    257 &  35 & 22  & 4 &  8  \\
    313 &  31 & 18  & 4 &  4  \\
  \end{array} $$
  \caption{Fields $\Q(\sqrt{2p}\,)$, where $p = e^2 - 2f^2$ with
    $f \equiv 2 \bmod 4$. Left: fields with $h \equiv 2 \bmod 4$;
    right: fields with $h$ divisible by $4$}
\end{table}

\subsection*{Class number divisibility}

Next we determine criteria for the divisibility of the class number of
$k = \Q(\sqrt{2p}\,)$ by powers of $2$ in terms of the representation
$p = e^2 - 2f^2$.

The $2$-class group of $k = \Q(\sqrt{2p}\,)$ is cyclic by genus theory.
If $p \equiv 1 \bmod 4$, then the genus class field of $k$ is
$k_\gen = \Q(\sqrt{2}, \sqrt{p}\,)$, hence the class number in the
usual sense is always even in this case.

\begin{prop}
  Let $p \equiv 1 \bmod 8$ be a prime number. Then $p = e^2 - 2f^2$
  for integers $e$ and $f > 0$ with $e \equiv 3 \bmod 4$ and
  $f \equiv 2 \bmod 4$.

  $$ \begin{array}{lc|l}
    \rsp \text{class number} &  N\eps_{2p} & \text{conditions} \\ \hline
    \rsp  h \equiv 2 \bmod 4 & +1    & e < 0 \\
    \rsp  h \equiv 4 \bmod 8 & -1    & e > 0, \ e \equiv 7 \bmod 8 \\
    \rsp  h \equiv 4 \bmod 8 & +1    & e > 0, \ e \equiv 3 \bmod 8 \\
    \rsp  h \equiv 0 \bmod 8 & \pm 1 & e > 0, \ e \equiv 3 \bmod 8
  \end{array} $$
\end{prop}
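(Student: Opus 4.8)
The plan is: since $p\equiv1\bmod8$ we have $\left(\frac2p\right)=1$, so $p$ splits in the principal ideal domain $\Z[\sqrt2]$; picking a generator of a prime divisor of $p$ and multiplying by the fundamental unit $\eps=1+\sqrt2$ (of norm $-1$) if needed to make the norm $+p$, I obtain $E,F\in\Z$ with $E^2-2F^2=p$, and a glance mod $8$ shows $F$ must be even. The solutions of $X^2-2Y^2=p$ are permuted by $(E,F)\mapsto(-E,-F)$, by conjugation $(E,F)\mapsto(E,-F)$, and by multiplication by $\eps^2=3+2\sqrt2$, i.e.\ $(E,F)\mapsto(3E+4F,\,2E+3F)$; modulo $4$ the last reverses both residues $E\bmod4$ and $F\bmod4$, the first reverses only $E\bmod4$, and conjugation reverses only the sign of $F$, so a suitable composition yields $(e,f)$ with $e\equiv3\bmod4$, $f\equiv2\bmod4$, $f>0$. (This is exactly the congruence $e+f\sqrt2\equiv3+2\sqrt2\bmod4$ that makes $K=k(\sqrt{e+f\sqrt2}\,)$ unramified at the finite primes.) I would then note that any two such normalized solutions differ by a power of $\eps^4=17+12\sqrt2$ — any further use of $\eps^{\pm2}$ or of conjugation would destroy one of the three conditions — and since $\times\eps^4$ preserves both the sign of $E$ and $E\bmod16$, the sign of $e$ and the residue $e\bmod8$ depend only on $p$, so the table is well posed.

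\textbf{Setting up the class field theory.} Recall that $k_\gen=\Q(\sqrt2,\sqrt p)$ is the genus field of $k$ in both the wide and narrow sense, so $\Cl_2(k)$ and $\Cl_2^+(k)$ are cyclic, with $h_2^+(k)=h_2(k)$ or $2h_2(k)$ according as $N\eps_{2p}=-1$ or $+1$, and $h\equiv2^j\bmod2^{j+1}$ iff $h_2(k)=2^j$. The well-known $C_4$-construction gives the cyclic quartic extension $K=k(\sqrt{e+f\sqrt2}\,)\supseteq k_\gen$, unramified at all finite primes; by cyclicity it is \emph{the} cyclic quartic subextension of the narrow Hilbert $2$-class field, so $4\mid h_2^+(k)$. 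Now $K$ is totally real iff $e+f\sqrt2\gg0$ iff $e>0$ (as $e^2>2f^2$, the conjugates $e\pm f\sqrt2$ have the sign of $e$), and a cyclic extension of $k$ unramified at all finite primes lies in the \emph{wide} Hilbert class field exactly when it is totally real; hence $4\mid h_2(k)\iff e>0$.

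\textbf{Deducing the table.} If $e<0$ then $4\nmid h_2(k)$, so $h_2(k)=2$; moreover $N\eps_{2p}=-1$ would make the narrow and wide Hilbert class fields coincide, forcing the non-real $K$ into the totally real wide one — a contradiction — so $e<0$ gives $h\equiv2\bmod4$ and $N\eps_{2p}=+1$. Assume now $e>0$, so $4\mid h_2(k)$; the step to the next layer I would handle by the Rédei--Reichardt criterion for $8\mid h_2^+(k)$ applied to $\disc k=8\cdot p$, feeding in the solution $(x,y,z)=(g,1,e)$ of $8x^2+py^2=z^2$ coming from $p=e^2-2f^2$ with $f=2g$, and evaluating the resulting quartic residue symbol by quadratic reciprocity: from $p\equiv-8g^2\bmod e$ one gets $\left(\frac pe\right)=\left(\frac{-2}{e}\right)$, hence $\left(\frac2p\right)_4=1\iff e\equiv3\bmod8$, and the criterion reads $8\mid h_2^+(k)\iff e\equiv3\bmod8$ (equivalently $p\equiv1\bmod16$). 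Consequently: $e\equiv7\bmod8$ forces $h_2^+(k)=4$, hence $h_2(k)=4$ and, since then $h_2^+=h_2$, $N\eps_{2p}=-1$; while $e\equiv3\bmod8$ forces $h_2^+(k)\ge8$, so $N\eps_{2p}=-1$ already gives $8\mid h_2(k)$, whereas if $h_2(k)=4$ then $h_2(k)\ne h_2^+(k)$ forces $N\eps_{2p}=+1$. These account for all cases and give the four rows.

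\textbf{The main obstacle.} The hard part will be the third-layer step: proving $8\mid h_2^+(k)\iff e\equiv3\bmod8$ for $e>0$, i.e.\ carrying the Rédei symbol — the quartic character $\left(\frac2p\right)_4$ together with the $2$-adic symbol at $\disc k$ — through the reciprocity bookkeeping to arrive at the single congruence on $e\bmod8$; checking that the normalized pair $(e,f)$ really pins down the sign of $e$ and $e\bmod8$ is a smaller but necessary companion point. One can alternatively read the third-layer step off from the explicit construction of the octic extensions carried out in the following sections, which exhibits an unramified cyclic octic over $k$ precisely when $e>0$ and $e\equiv3\bmod8$ — totally real if $N\eps_{2p}=-1$, ramified at infinity if $N\eps_{2p}=+1$ — and shows that none can exist otherwise.
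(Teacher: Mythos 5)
Your proposal is correct and follows essentially the same route as the paper: the cyclic quartic extension $k(\sqrt{e+f\sqrt{2}}\,)$, unramified at all finite primes, settles the $4\mid h$ versus $h\equiv 2\bmod 4$ dichotomy through its behaviour at infinity (the sign of $e$), and the $8\mid h^+$ criterion is reduced, via $\bigl(\tfrac pe\bigr)=\bigl(\tfrac{-2}{e}\bigr)$ together with Scholz's reciprocity law (your Rédei--Reichardt step corresponds to the paper's Lemma~\ref{Lem1} and Proposition~\ref{Pr01}), to the congruence $e\equiv 3\bmod 8$. Your additional verification that the normalization $e\equiv 3\bmod 4$, $f\equiv 2\bmod 4$ pins down the sign of $e$ and the residue $e\bmod 8$ is a worthwhile point the paper leaves implicit.
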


\begin{proof}
  The extension $k(\sqrt{\alpha}\,)$ for $\alpha = e + f\sqrt{2}$
  is a cyclic quartic extension of $k = \Q(\sqrt{2p}\,)$ containing
  $k_\gen = \Q(\sqrt{2},\sqrt{2p}\,)$. Since
  $N(\alpha) = \alpha\alpha' = e^2 - 2f^2 = p$, it can only ramify above
  the primes dividing $2 p \infty$. Since
  $k(\sqrt{\alpha}\,) = k(\sqrt{\alpha'}\,)$ and since $\alpha$ and $\alpha'$
  are coprime,  it can only ramify at $2$ and $\infty$.

  Since $\alpha \equiv 3+2\sqrt{2} = (1+\sqrt{2}\,)^2 \bmod 4$, the
  extension $k(\sqrt{\alpha}\,)$ is a cyclic quartic extension
  unramified except possibly at the infinite primes.

  Thus if $e < 0$, then the $2$-class number of $k$ is $h = 2$ and the
  $2$-class number of $k$ in the strict sense is $h^+ = 4$; this proves the
  first line.

  If $e > 0$ and $e \equiv 7 \bmod 8$, then $(\frac p2)_4 = (\frac2p)_4 = -1$
  by Lemma~\ref{Lem1} below, hence $h = h^+ = 4$ by Prop.~\ref{Pr01}.

  If $e > 0$ and $e \equiv 3 \bmod 8$, then $(\frac p2)_4 = (\frac2p)_4 = +1$,
  hence $h^+$ is divisible by $8$. 
\end{proof}

\begin{prop}\label{Pr01}
  Let $p$ be an odd prime number. The class number $h^+$ in the strict
  sense of $\Q(\sqrt{2p}\,)$ is divisible by $8$ if and only if
  $p \equiv 1 \bmod 16$ and $(\frac 2p)_4 = +1$.
  We have $h \equiv 4 \bmod 8$ and $N\eps_{2p} = -1$  if and only if
  $p \equiv 9 \bmod 16$ and $(\frac 2p)_4 = -1$.
\end{prop}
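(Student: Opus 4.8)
\emph{The plan.} I would reduce everything to the case $p \equiv 1 \bmod 8$ and then decide, by an explicit construction, whether the unramified cyclic quartic extension $K/k$ of $k = \Q(\sqrt{2p}\,)$ admits a further unramified quadratic extension that is cyclic of degree $8$ over $k$. First the reduction: if $p \equiv 5 \bmod 8$ then $h \equiv 2 \bmod 4$ with $N\eps_{2p} = -1$, so $h^+ = h \equiv 2 \bmod 4$; if $p \equiv 3 \bmod 4$ then $8p$ admits no factorization $d_1d_2$ into coprime discriminants with $(\frac{d_1}{p}) = (\frac{d_2}{2}) = 1$ (as $(\frac{-2}{p}) = -1$ for $p \equiv 7$ and $(\frac{-p}{2}) = -1$ for $p \equiv 3 \bmod 8$), so $h^+ \equiv 2 \bmod 4$, and since $N\eps_{2p} = +1$ then, $h$ is odd. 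In all these cases both left-hand sides are false, and since $p \equiv 1, 9 \bmod 16$ each force $p \equiv 1 \bmod 8$, both right-hand sides are false too. So I assume henceforth $p \equiv 1 \bmod 8$; then $4 \mid h^+$ and $K = k(\sqrt\alpha\,)$ with $\alpha = e + f\sqrt2$, $p = e^2 - 2f^2$, is the unramified cyclic quartic extension of $k$ inside its strict Hilbert class field. Since $\Cl_2^+(k)$ is cyclic, $8 \mid h^+(k)$ holds iff $K$ carries an everywhere-unramified quadratic extension $L = K(\sqrt\mu\,)$ that is cyclic of degree $8$ over $k$, and $8 \mid h(k)$ iff moreover $L/k$ is unramified at the archimedean places.

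\emph{Conditions on $\mu$.} Let $\sigma$ generate $\Gal(K/k)$. Writing out when $\sigma$ extends to an automorphism of $L$ of order $8$ gives: $L/k$ is cyclic of degree $8$ iff $\sigma(\mu)/\mu = \delta^2$ for some $\delta \in K^\times$ with $N_{K/k}(\delta) = -1$; and $L/K$ is unramified outside $2$ iff $(\mu)$ is a square ideal away from the primes above $2$. I would take $\mu = \sqrt\alpha\,\beta$ with $\beta \in k_\gen$ supported above $2$, chosen so that $(\mu)$ is a square ideal — the transcription to $\Q(\sqrt{2p}\,)$ of the element used over $\Q(\sqrt{-p}\,)$ in \cite{LemD}. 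For such $\mu$ the ideal condition is built in, and what remains is (i) a local condition at $2$ (that $\mu$ be a square in each completion of $K$ above $2$) and (ii) the norm condition $N_{K/k}(\delta) = -1$.

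\emph{The two congruence conditions.} Condition (i) becomes a congruence on $\alpha = e + f\sqrt2$ modulo a power of $2$; since $e \equiv 3 \bmod 4$ and $f \equiv 2 \bmod 4$ one computes $p \equiv e^2 - 8 \bmod 16$, and (i) turns out to be exactly $p \equiv 1 \bmod 16$. Condition (ii) is a Hilbert-symbol/reciprocity condition; I would evaluate it with the rational biquadratic reciprocity law, using the identity expressing $(\frac2p)_4$ through the representation $p = e^2 - 2f^2$ (in the relevant normalisation $(\frac2p)_4 = (\frac ep)$), and this shows (ii) holds precisely when $(\frac2p)_4 = +1$. When either condition fails one checks that no admissible $\mu$ can meet both requirements, so $8 \nmid h^+$. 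This proves the first assertion.

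\emph{The norm of the fundamental unit, and the hard part.} For the second assertion: $4 \mid h^+$ with $\Cl_2^+(k)$ cyclic gives $h^+ \equiv 4 \bmod 8 \iff 8 \nmid h^+$, and $N\eps_{2p} = -1$ gives $h = h^+$, so ``$h \equiv 4 \bmod 8$ and $N\eps_{2p} = -1$'' means ``$N\eps_{2p} = -1$ and $8 \nmid h^+$''. It then remains to show, for $p \equiv 1 \bmod 8$ with $8 \nmid h^+$, that $N\eps_{2p} = -1$ iff $p \equiv 9 \bmod 16$ and $(\frac2p)_4 = -1$; I would deduce this from the classical criterion for the norm of the fundamental unit of $\Q(\sqrt{2p}\,)$ together with the congruences $e \equiv 3 \bmod 8 \iff p \equiv 1 \bmod 16$, $e \equiv 7 \bmod 8 \iff p \equiv 9 \bmod 16$ and $(\frac2p)_4 = (\frac ep)$ already used (in particular $p \equiv 1 \bmod 16$ with $(\frac2p)_4 = -1$, and $p \equiv 9 \bmod 16$ with $(\frac2p)_4 = +1$, both force $N\eps_{2p} = +1$). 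The main obstacle is the third step: producing the correct $\mu$ and proving that $K(\sqrt\mu\,)/k$ is unramified and cyclic of degree $8$ exactly under the stated hypotheses. The delicate points will be the local analysis at the prime $2$ in the quartic field $K$ — this is where the modulus jumps from $8$ to $16$ — and keeping the normalisations of $e$, $f$ and of $(\frac2p)_4$ consistent in the biquadratic reciprocity computation; the converse (non-existence when a condition fails) requires showing the relevant Hilbert symbol is nontrivial.
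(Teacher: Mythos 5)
The paper disposes of this proposition in one line, as a special case of Scholz's reciprocity law; your plan is instead to reprove the criterion by explicitly constructing (or obstructing) the cyclic octic extension over $K=k(\sqrt{\alpha}\,)$. That is a legitimate alternative route --- it is essentially the descent machinery of \cite{LemD} and of Section 2 of this paper --- but as written it has a genuine gap: the two decisive equivalences are asserted rather than proved. You say the local condition at the primes above $2$ ``turns out to be exactly $p\equiv 1\bmod 16$'', that the norm condition $N_{K/k}(\delta)=-1$ ``shows (ii) holds precisely when $(\frac 2p)_4=+1$'', and that when either condition fails ``one checks'' that no admissible $\mu$ exists. These three computations (a $2$-adic square calculation in the quartic field $K$, a Hilbert-symbol/quartic-reciprocity evaluation, and the non-existence converse) are the entire content of the proposition; they are exactly where Scholz-type reciprocity enters, and none of them is carried out. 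Moreover, your second assertion additionally invokes an unnamed ``classical criterion for the norm of the fundamental unit of $\Q(\sqrt{2p}\,)$''. Any criterion sharp enough to separate $p\equiv 1\bmod 16$ from $p\equiv 9\bmod 16$ against the value of $(\frac 2p)_4$ is itself a form of Scholz's reciprocity law, so at that point you are assuming a statement of the same depth as the one being proved rather than deducing it.

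Two secondary points if you do carry the program out. First, your reduction sentence has $h$ and $h^+$ interchanged: $8\mid h^+(k)$ corresponds to a quadratic extension $L/K$, cyclic of degree $8$ over $k$, unramified at all \emph{finite} primes (ramification at infinity being allowed), while ``everywhere unramified'' is the condition for $8\mid h(k)$; note also that $K$ itself may be ramified at infinity (the case $e<0$), so the bookkeeping at the real places has to be done over $k$, not just over $K$. Second, the cyclicity criterion you quote ($\sigma(\mu)/\mu=\delta^2$ with $N_{K/k}(\delta)=-1$) is correct, and the identity $(\frac 2p)_4=(\frac ep)$ you plan to use matches the computation in the paper's Lemma~\ref{Lem1}; the missing work is the verification, in both directions, that these symbols take the claimed values exactly when $p\equiv 1\bmod 16$ and $(\frac 2p)_4=+1$ (respectively $p\equiv 9\bmod 16$ and $(\frac 2p)_4=-1$).
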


\begin{proof}
  This is a special case of Scholz's reciprocity law.  
\end{proof}

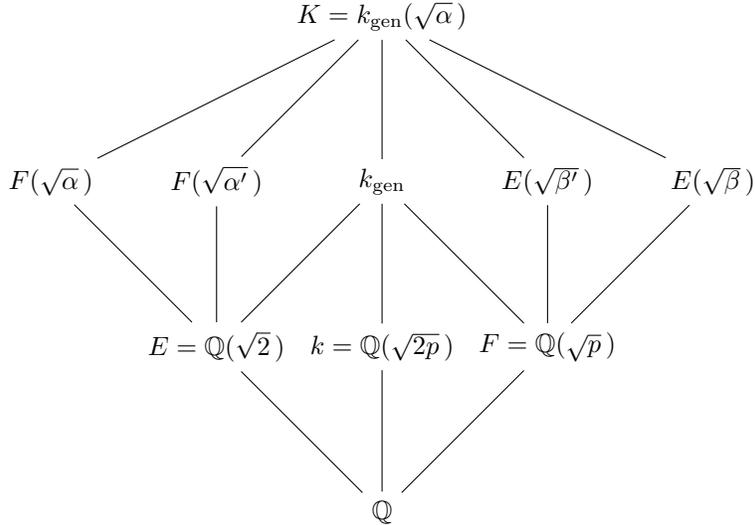
\begin{figure}[ht!]
  \begin{tikzpicture}[node distance=2.2cm,scale=0.7]
  \node (Q) at (0,0) {$\Q$};
  \node (k) [above of=Q] {$k=\Q(\sqrt{2p}\,)$};
  \node (F) [left  of=k] {$E=\Q(\sqrt{2}\,)$};
  \node (E) [right of=k] {$F=\Q(\sqrt{p}\,)$};
  \node (K) [above of=k] {$k_\gen$};
  \node (F41) [left  of=K]   {$F(\sqrt{\alpha'}\,)$};
  \node (F42) [left  of=F41] {$F(\sqrt{\alpha}\,)$};
  \node (E41) [right  of=K]   {$E(\sqrt{\beta'}\,)$};
  \node (E42) [right of=E41] {$E(\sqrt{\beta}\,)$};
  \node (L) [above of=K] {$K=k_\gen(\sqrt{\alpha}\,)$};
  \draw (Q) -- (k);
  \draw (Q) -- (E);
  \draw (Q) -- (F);
  \draw (k) -- (K);
  \draw (F) -- (K);
  \draw (F) -- (F41);
  \draw (F) -- (F42);
  \draw (E) -- (K);
  \draw (E) -- (E41);
  \draw (E) -- (E42);
  \draw (K) -- (L);
  \draw (F41) -- (L);
  \draw (F42) -- (L);
  \draw (E41) -- (L);
  \draw (E42) -- (L);
  \end{tikzpicture}
  \caption{Subfields of $L = k(\sqrt{\alpha}\,)$}
\end{figure}

\begin{lem}\label{Lem1}
  Let $p = e^2 - 2f^2 \equiv 1 \bmod 8$ be a prime and assume that
  $f \equiv 2 \bmod 4$.  Then
  $$ \Big(\frac p2 \Big)_4 = \Big(\frac p2 \Big)_4 = \Big(\frac{-2}e \Big). $$
\end{lem}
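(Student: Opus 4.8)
The plan is to reduce both quartic symbols to ordinary quadratic residue
symbols and to evaluate those by reciprocity.

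First I would dispose of the left--hand equality $(\tfrac p2)_4=(\tfrac 2p)_4$,
which holds for every $p\equiv 1\bmod 8$. Both symbols lie in $\{\pm1\}$
(because $(\tfrac 2p)=(\tfrac{-2}p)=1$), and when written as $2^{(p-1)/4}$
and $(-2)^{(p-1)/4}\bmod p$ they agree, since $(-1)^{(p-1)/4}=1$ once
$8\mid p-1$. So the substance is the evaluation of $(\tfrac 2p)_4$.

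Next I would compute $(\tfrac 2p)_4$. Since $p$ is prime, $p\nmid ef$: if
$p\mid e$ then $p\mid 2f^2$, so $p\mid f$ and $p^2\mid e^2-2f^2=p$, which is
impossible. Reducing $e^2-2f^2=p$ modulo $p$ gives $2\equiv(e/f)^2\pmod p$
with $f$ invertible, so (using $p\equiv 1\bmod 4$)
\[
  \left(\frac 2p\right)_4\equiv 2^{(p-1)/4}\equiv(e/f)^{(p-1)/2}
  \equiv\left(\frac{ef}{p}\right)\pmod p .
\]
Now I would flip by quadratic reciprocity. We may assume $e,f>0$ (replace
$(e,f)$ by $(|e|,|f|)$) and write $f=2f'$ with $f'$ odd; this is where
$f\equiv 2\bmod 4$ enters. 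As $p\equiv 1\bmod 4$ and
$\gcd(e,f)=\gcd(e,f')=1$, reciprocity gives $(\tfrac ep)=(\tfrac pe)$ and
$(\tfrac{f'}p)=(\tfrac p{f'})$, while $p\equiv-2f^2\pmod e$ and
$p\equiv e^2\pmod f$ yield
\[
  \left(\frac pe\right)=\left(\frac{-2f^2}{e}\right)=\left(\frac{-2}{e}\right),
  \qquad
  \left(\frac fp\right)=\left(\frac 2p\right)\left(\frac{f'}p\right)
  =\left(\frac p{f'}\right)=\left(\frac{e^2}{f'}\right)=1 .
\]
Multiplying, $(\tfrac 2p)_4=(\tfrac ep)(\tfrac fp)=(\tfrac{-2}e)$, which
together with the first step proves the lemma.

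I expect the obstacle to be bookkeeping rather than any real difficulty:
one has to be careful with (i) the coprimality facts $\gcd(e,f)=1$ and
$p\nmid ef$, which is exactly where primality of $p$ is spent; (ii) the sign
of $e$ --- even after imposing $f\equiv 2\bmod 4$ the representation
$p=e^2-2f^2$ fixes $e$ only up to sign, so the right--hand side is to be
read with a Jacobi symbol on $|e|$, and one should check it is unchanged
under $e\mapsto -e$; and (iii) keeping track of which hypothesis does what
($p\equiv 1\bmod 4$ in the reciprocity flips and in
$2^{(p-1)/4}\equiv(e/f)^{(p-1)/2}$, the full $p\equiv 1\bmod 8$ in
$(\tfrac p2)_4=(\tfrac 2p)_4$, and $f\equiv 2\bmod 4$ in the splitting
$f=2f'$). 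Since the only arithmetic involved is quadratic reciprocity
together with the congruences $p\equiv-2f^2\bmod e$ and $p\equiv e^2\bmod f$,
the argument is short once the precise meaning of $(\tfrac p2)_4$ is fixed.
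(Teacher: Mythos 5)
Your evaluation of $(\frac 2p)_4$ is correct and is in substance the paper's own argument: from $e^2\equiv 2f^2\bmod p$ one gets $(\frac 2p)_4=(\frac ep)(\frac fp)$, then $(\frac fp)=+1$ via $f=2f'$, $(\frac 2p)=+1$ and $p\equiv e^2\bmod f'$, and $(\frac ep)=(\frac pe)=(\frac{-2}e)$ by reciprocity together with $p\equiv -2f^2\bmod e$. The paper phrases this as $(\frac ep)=(\frac 2p)_4(\frac fp)$ with $f=2g$, but it is the same computation, and your bookkeeping of where $p\equiv 1\bmod 4$, $p\equiv 1\bmod 8$ and the coprimality of $e,f,p$ enter is accurate.

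The gap is in your first step. You read $(\frac p2)_4$ as $(-2)^{(p-1)/4}\bmod p$, i.e.\ as a power residue symbol modulo $p$; but the symbol with $2$ in the denominator is a character of $p$ at the prime $2$, namely $(\frac p2)_4=(-1)^{(p-1)/8}$ for $p\equiv 1\bmod 8$ (this is how it functions in Proposition~\ref{Pr01}, where the condition $(\frac p2)_4=+1$ appears in the guise $p\equiv 1\bmod{16}$). With that meaning, the equality $(\frac p2)_4=(\frac 2p)_4$ is emphatically not a formality valid for every $p\equiv 1\bmod 8$: for $p=17=5^2-2\cdot 2^2$ one has $(\frac{17}{2})_4=(-1)^2=+1$ while $2^4\equiv -1\bmod{17}$ gives $(\frac{2}{17})_4=-1$. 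What actually makes the first equality work is the hypothesis $f\equiv 2\bmod 4$, which forces $p\equiv e^2-8\bmod{16}$ and hence $(\frac p2)_4=+1$ exactly when $e\equiv\pm 3\bmod 8$, combined with the sign normalization $e>0$, $e\equiv 3\bmod 4$ used elsewhere in the paper (so that $(\frac{-2}e)=+1$ iff $e\equiv 3\bmod 8$, matching the previous criterion). Your remark (ii) about reading the right-hand side as a Jacobi symbol on $|e|$ makes that side well defined but does not rescue the first equality, as the example $p=17$ (where $|e|=5\equiv 1\bmod 4$) shows. In short: your proof of $(\frac 2p)_4=(\frac{-2}e)$ is fine and coincides with the paper's; the equality $(\frac p2)_4=(\frac 2p)_4$ requires a separate argument from $p\bmod{16}$ and the congruence class and sign of $e$, which your sketch does not supply.
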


Write $p = e^2 - 2f^2$; if $f$ is divisible by $4$, then
replace $e + f \sqrt{2}$ by
$$ e' + f'\sqrt{2} = (e + f \sqrt{2}\,)(3 + 2\sqrt{2}\,)
   = 3e+4f + (2e+3f) \sqrt{2}; $$
then $f' \equiv 2 \bmod 4$. Thus we may assume that $f \equiv 2 \bmod 4$.
This implies $p \equiv e^2 - 8 \bmod 16$. Since $p \equiv 1 \bmod 16$ when
$h$ is divisible by $8$ we must have $e \equiv \pm 3 \bmod 8$.
On the other hand reducing $p = e^2 - 2f^2$ modulo $e$ we find
$(\frac pe) = (\frac{-2}e)$; since $(\frac ep) = (\frac{2}p)_4 (\frac fp)$
and, writing $f = 2g$,
$(\frac fp) = (\frac{2g}p) = (\frac{g}p) = (\frac pg) = +1$.
Thus $(\frac{-2}e) = +1$, and hence $e \equiv 3 \bmod 8$.

Let us summarize our main result concerning the cyclic quartic unramified
extension of $k = \Q(\sqrt{2p}\,)$:

\begin{prop}
  Let $p = e^2 - 2f^2 \equiv 1 \bmod 8$ be a prime number, and choose
  $e$ and $f$ in such a way that $e \equiv 3 \bmod 4$ and
  $f \equiv 2 \bmod 4$. Then $k = \Q(\sqrt{2p}\,)$ admits a cyclic
  quartic extension $K = k(\sqrt{\alpha}\,)$ with $\alpha = e + f\sqrt{2}$
  unramified at all finite primes. In addition we have:
  \begin{itemize}
  \item $K$ is totally real if and only if one of the following equivalent
    conditions holds:
    \begin{enumerate}
    \item The class number $h_k$ is divisible by $4$;
    \item $e > 0$;
    \end{enumerate}
  \item $K$ is totally complex if and only if one of the following equivalent
    conditions holds:
    \begin{enumerate}
    \item $h_k \equiv 2 \bmod 4$ and $N\eps_{2p} = +1$;
    \item $e < 0$.
    \end{enumerate}
  \end{itemize}  
\end{prop}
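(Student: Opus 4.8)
\emph{Proof proposal.} The existence of $K$, together with the stated properties that $K/k$ is cyclic of degree $4$, unramified at every finite prime, and contains $k_\gen=\Q(\sqrt2,\sqrt p\,)$, I would take as already established: they are exactly what was shown in the proof of the first Proposition of this subsection, and they reflect the fact that $d=8p$ is a $C_4$-factorization. So the plan is to prove only the ``In addition'' part, in two steps: first pin down the infinite places of $K$ directly from the sign of $e$, and then feed this into that first Proposition to obtain the class-number equivalences.

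\textbf{Step 1 (archimedean places of $K$).} I would write $K=k_\gen(\sqrt\alpha\,)=\Q(\sqrt2,\sqrt p,\sqrt\alpha\,)$ and describe its eight embeddings into $\C$: each is fixed by a sign for $\sqrt2$, a sign for $\sqrt p$, and a choice of one of the two square roots of the corresponding value of $\alpha$. Since $\alpha=e+f\sqrt2$ lies in $\Q(\sqrt2\,)$, its image under such an embedding is $e+f\sqrt2$ or $e-f\sqrt2$, depending only on the sign chosen for $\sqrt2$ and not on that for $\sqrt p$. From $p=e^2-2f^2>0$ one gets $e^2>2f^2$, hence $|e|>\sqrt2\,|f|$, so $e+f\sqrt2$ and $e-f\sqrt2$ both carry the sign of $e$; and $e\neq0$, since $e=0$ would give $p<0$. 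Consequently, if $e>0$ then every embedding sends $\alpha$ to a positive real, $\sqrt\alpha$ may be taken real, and $K$ is totally real; if $e<0$ then every embedding sends $\alpha$ to a negative real, no embedding of $K$ is real, and $K$ is totally complex. Thus $K$ is totally real $\iff e>0$, $K$ is totally complex $\iff e<0$, and these two cases are exhaustive.

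\textbf{Step 2 (class-number conditions).} Here I would first record the easy implication: if $K$ is totally real then, being unramified at all finite primes as well, $K$ is an everywhere-unramified abelian extension of $k$, hence lies in the Hilbert class field of $k$, and $[K:k]=4$ forces $4\mid h_k$. For the remaining implications I would invoke the first Proposition of this subsection, which asserts that $e>0$ forces $h_k\equiv0\bmod4$, while $e<0$ forces $h_k\equiv2\bmod4$ together with $N\eps_{2p}=+1$. Combining with Step 1: $4\mid h_k$ is incompatible with $e<0$, so $4\mid h_k\Rightarrow e>0$, and the three conditions ``$K$ totally real'', ``$4\mid h_k$'', ``$e>0$'' become mutually equivalent; dually, $h_k\equiv2\bmod4$ is incompatible with $e>0$, hence equivalent to $e<0$, which in turn yields $N\eps_{2p}=+1$, giving the mutual equivalence of ``$K$ totally complex'', ``$h_k\equiv2\bmod4$ and $N\eps_{2p}=+1$'', ``$e<0$''.

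\textbf{Expected difficulty.} I do not anticipate a real obstacle, since the two Propositions quoted carry the substantive content. The one point to handle with care is the sign analysis of Step 1, which is precisely what makes the totally-real/totally-complex alternative exhaustive: it works because $\alpha$ lies in the \emph{proper} subfield $\Q(\sqrt2\,)$ of $k_\gen$, so its archimedean values are governed by the single sign of $\sqrt2$, and because positivity of $p$ forces $e^2>2f^2$. I would also make explicit that $N\eps_{2p}=+1$ in the complex case is not reproved here but taken from the first Proposition — equivalently, there $k_\gen$ is unramified over $k$ so $2\mid h_k$, while $h_k^+=4=2h_k$, and $h_k^+=2h_k$ is exactly the condition $N\eps_{2p}=+1$.
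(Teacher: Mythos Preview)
Your proposal is correct and matches the paper's approach: the paper presents this proposition explicitly as a \emph{summary} of the preceding Proposition~1, so there is no separate proof given, and the implicit argument is precisely the one you spell out --- the sign analysis of $\alpha$ and $\alpha'$ (both have the sign of $e$ since $e^2>2f^2$) to settle the archimedean behaviour, combined with the table of Proposition~1 to read off the class-number conditions.
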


If $h_k \equiv 2 \bmod 4$ and $N\eps_{2p} = -1$, then the Hilbert
$2$-class field of $k$ is the genus field $k_\gen = \Q(\sqrt{2}, \sqrt{p}\,)$;
in this case there is no cyclic quartic unramified extension of $k$.

Our next goal is constructing an unramified cyclic octic extension of $k$ by
finding a quadratic unramified extension of $K$.

\section{Cyclic octic extensions}

Cyclic octic extensions of $\Q(\sqrt{2p}\,)$ unramified at all finite
primes exist if and only if the class number $h^+$ in the strict sense is
divisible by $8$; this holds if either $h \equiv 0 \bmod 8$, or if
$h \equiv 4 \bmod 8$ and $N\eps_{2p} = +1$.

$L = \Q(\sqrt{2},\sqrt{p}\,)$ is a quadratic extension of the
reasonable\footnote{A number field is called reasonable if it is totally
  real and has odd class number in the strict sense; see \cite{LTh,LemD}.}
field $F = \Q(\sqrt{2}\,)$. Let $K = k_\gen(\sqrt{\alpha}\,)$ be the unramified
cyclic quartic extension of $k$ unramified at all finite primes. 
By the results of \cite{LTh}, such an extension exists if and only if
$p = \alpha \cdot \alpha'$ is a $C_4$-factorization, i.e., if and only
if $[\alpha/\alpha'] = +1$.
In our case, this condition is satisfied since 
$[\alpha/\alpha'] = [\alpha + \alpha'/\alpha'] = [2e/\alpha'] =
 (2e/p) = (e/p) = (p/e) = (-2/e) = +1$.

In this case, the diophantine equation
\begin{equation}\label{EqC4} A^2 - \alpha B^2 = \alpha'C^2 \end{equation}
has nontrivial solutions in $\Z[\sqrt{2}\,]$, and then $L(\sqrt{\mu}\,)$
with $\mu = A+B\sqrt{\alpha}$ is a cyclic octic extension of
$\Q(\sqrt{2p}\,)$ unramified outside $2 \infty$; the proof of these
claims is well known (see e.g. \cite{LemD}). We know from \cite{LTh}
(see also \cite[Cor. 5.3]{LemDir}) that  
there exists a unit $\eps \in F = \Q(\sqrt{2}\,)$ such that
$L\big(\sqrt{(A+B\sqrt{\alpha}\,)\eps}\big)$ is a cyclic octic extension of
$\Q(\sqrt{2p}\,)$ unramified outside $\infty$. 
We will now explain how to solve the equation (\ref{EqC4}) and how to
find this unit $\eps$.

For solving the equation (\ref{EqC4}) for $\alpha = e + f\sqrt{2}$ we can take
$$ A = u\sqrt{2}, \quad B = r + s \sqrt{2}, \quad C = r - s\sqrt{2} $$
for integers $r, s, u$ with $u$ odd. Then
$$ u^2 = er^2 + 4rsf + 2es^2, $$
hence
$$ eu^2 = (er)^2 + 4fs(er) + 2e^2s^2 = (er+2fs)^2 + 2(e^2 - 2f^2)s^2. $$
Since $e^2 - 2f^2 = p$ we now have the diophantine equation
\begin{equation}\label{EDZ1} eu^2 = t^2 + 2ps^2 \end{equation}
with $t = er + 2fs$.

\begin{lem}
  If $e > 0$, $e \equiv 3 \bmod 8$ and $f \equiv 2 \bmod 4$, then the
  diophantine equation (\ref{EDZ1}) is solvable in odd integers
  $s$, $t$ and $u$.
\end{lem}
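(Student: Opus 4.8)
The plan is to view equation~(\ref{EDZ1}) as the statement that the ternary quadratic form $Q(t,s,u)=t^2+2ps^2-eu^2$ is isotropic over $\Q$, to prove this by the Hasse--Minkowski theorem, and then to show by a short congruence argument that a primitive integral solution must have $t,s,u$ all odd. Over any field $Q$ is isotropic exactly when the binary form $\la 1,2p\ra$ represents $e$ (here $e\neq0$), and over a completion $\Q_v$ this is in turn equivalent to the Hilbert symbol identity $(e,-2p)_v=1$. So the first task is to check $(e,-2p)_v=1$ at every place $v$; Hasse--Minkowski then produces a nonzero rational solution, and clearing denominators and dividing by the greatest common divisor yields a primitive integral one.

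For the local conditions I would argue as follows. Over $\R$ we get $(e,-2p)_\infty=1$ since $e>0$ and $p>0$. For an odd prime $q\nmid 2pe$ the form $Q$ is unimodular over $\Z_q$, hence isotropic. At $v=p$ one has $(e,-2p)_p=(e,p)_p=\big(\tfrac ep\big)$, which is $+1$ because $\big(\tfrac ep\big)=\big(\tfrac pe\big)=\big(\tfrac{-2}e\big)=+1$ --- precisely the computation made above when verifying that $p=\alpha\alpha'$ is a $C_4$-factorization (it uses $p\equiv-2f^2\bmod e$, $\gcd(e,f)=1$, and $e\equiv3\bmod8$). At an odd prime $q\mid e$, writing $m=v_q(e)$, bilinearity of the Hilbert symbol gives $(e,-2p)_q=\big(\tfrac{-2p}q\big)^m$; but $q\mid e$ forces $-2p\equiv-2(e^2-2f^2)\equiv4f^2\bmod q$, which is a nonzero square modulo $q$ because $\gcd(e,f)=1$, so this symbol is $+1$. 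Finally at $v=2$: since $e\equiv3\bmod8$ and $p\equiv1\bmod16$ (the latter follows from $e\equiv3\bmod8$ and $f\equiv2\bmod4$), the elementary formulas for the dyadic Hilbert symbol give $(e,-1)_2=(e,2)_2=-1$ and $(e,p)_2=+1$, so $(e,-2p)_2=(e,-1)_2(e,2)_2(e,p)_2=+1$; alternatively this last value is forced by Hilbert reciprocity once all the others are known to be $+1$. Hence $Q$ is isotropic over every $\Q_v$, and therefore over $\Q$.

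It remains to determine the parity of a primitive integral solution $(t,s,u)$ of $eu^2=t^2+2ps^2$. Reducing modulo $2$ and using that $e$ is odd gives $u\equiv t\bmod2$. If $t$ and $u$ were both even, then $eu^2-t^2\equiv0\bmod4$ would force $ps^2\equiv0\bmod2$, hence $s$ even, contradicting primitivity; so $t$ and $u$ are both odd. Then $u^2\equiv t^2\equiv1\bmod8$, so $2ps^2=eu^2-t^2\equiv e-1\equiv2\bmod8$, whence $ps^2\equiv1\bmod4$; as $p\equiv1\bmod4$, this forces $s^2\equiv1\bmod4$, so $s$ is odd as well. (In particular $s\neq0$; note also that $e\equiv3\bmod8$ is not a square, so no degenerate solution with $s=0$ can occur.)

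The step I expect to be the real obstacle is the local analysis at the odd prime divisors of $e$: Hilbert reciprocity only guarantees that $\prod_{q\mid e}(e,-2p)_q=1$, so a priori these symbols could be nontrivial in pairs, and one genuinely needs the congruence $-2p\equiv4f^2\bmod q$ --- a direct consequence of $p=e^2-2f^2$ together with $\gcd(e,f)=1$ --- to conclude that each one separately equals $+1$. A minor shortcut worth recording: one may reduce at once to the case that $e$ is squarefree, replacing $eu^2$ by $e_0(cu)^2$ when $e=e_0c^2$ (harmless since $c$ is odd), after which Legendre's theorem applied to $t^2+2ps^2-e_0w^2=0$ could take the place of the Hilbert-symbol bookkeeping above; but the argument as given works uniformly in $e$.
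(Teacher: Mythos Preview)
Your proof is correct and follows essentially the same approach as the paper: both establish local solvability of the ternary form $t^2+2ps^2-eu^2$ (you via Hilbert symbols and Hasse--Minkowski, the paper via congruences mod $\infty$, $8$, $p$, $e$ in the spirit of Legendre), and both reduce to a primitive solution and check parities. Your treatment is more careful at the primes dividing $e$ (handling each $q\mid e$ separately rather than using a Jacobi symbol), and your parity argument spells out that $t$ odd forces $u$ and $s$ odd, which the paper leaves implicit.
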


\begin{proof}
  If the equation is solvable with $t$ even then $u$ and $s$ are
  also even; dividing through by $2$ sufficiently often we obtain a solution
  in odd integers. Thus we only have to show that the equation has a
  nontrivial rational solution (multiplying through by the common
  denominator yields an integral solution).

  For showing that $eu^2 = t^2 + 2ps^2$ has a nonzero rational solution
  we have to show solvability modulo the reals (which is equivalent to
  $e > 0$), modulo $8$ (which follows from $e \equiv 1 + 2p \equiv 3 \bmod 8$),
  modulo $p$ and modulo $e$.

  Solvability modulo $p$ is equivalent to $(\frac ep) = +1$.
  But $(\frac ep) = (\frac pe) = (\frac{-2}e) = +1$ since $p  = e^2 - 2f^2$.

  Since $t^2 \equiv -2ps^2 \bmod e$ we have to show that
  $(\frac{-2p}{e}) = +1$.
  But $p = e^2 - 2f^2$ implies $(\frac{p}e) = (\frac{-2}e) = +1$.   
\end{proof}

\begin{table}[ht!]
$$ \begin{array}{r|cc|cc|rrrr}
    \rsp  p  & h & N\eps_{2p} & e & f  & u & t & s & r \\ \hline
    \rsp  113 & 8 & - 1 & 11 &  2 &   5 &   7 & -1 & 1 \\
    \rsp  257 & 4 & \ 1 & 35 & 22 &  11 &  61 & -1 & 3 \\
    \rsp  337 & 4 & \ 1 & 27 & 14 &   5 &   1 &  1 & -1 \\
    \rsp  353 & 4 & \ 1 & 19 &  2 &   7 &  15 & -1 &  1 \\ 
    \rsp 1201 & 8 & - 1 & 43 & 18 &  37 & 193 & -3 &  7 \\
    \rsp 1217 & 8 & \ 1 & 35 &  2 &  19 & 101 & -1 &  3 \\
    \rsp 1601 & 8 & - 1 & 67 & 38 &   7 &   9 &  1 & -1 \\
    \rsp 1777 & 8 & - 1 & 43 &  6 &  47 & 251 &  3 &  5 \\
    \rsp 2113 & 8 &  -1 & 99 & 62 &   7 &  25 &  1 & -1
  \end{array} $$  
  \caption{Fields $\Q(\sqrt{2p}\,)$ with narrow class number divisible by $8$}
\end{table}

Note that
$$ eu^2 = t^2 + 2ps^2 = t^2 + 2(e^2 - 2f^2)s^2 = t^2 + 2e^2 - 4f^2s^2 $$
implies that $t^2 \equiv (2fs)^2 \bmod e$. 
If we choose the sign of $s$ in such a way that $t \equiv 2fs \bmod e$, 
then $r = \frac{t-2fs}{e}$ is an integer. This is always possible if $e$
is prime; if $e$ is composite, such a choice is not always possible.

Consider e.g. $p = 2593$ with $e = 51$ and $f = 2$; here $u = 19$,
$t = 115$ and $s = 1$. Although $t^2 \equiv (2fs)^2 \equiv 16 \bmod 51$,
we have $\gcd(t-2fs,e) = 3$ and $\gcd(t+2fs,e) = 17$. The same
problem occurs for the solution $(u,t,s)=(353, 47, 35)$; with
$(u,t,s)= (75, 181, 7)$, we do get an integral value for $r$.

If we work with the first solution and clear denominators, we get the
required extension:
$$ \mu = \Big[57\sqrt{2} + (7 - 3\sqrt{2}\,) \sqrt{51 + 2\sqrt{2}}\Big]
         (1+\sqrt{2}\,). $$

\bigskip

Recall that there is a unique unit $\eps \in \{\pm 1, \pm 1 \pm \sqrt{2}\}$
such that $\mu \eps$ generates a cyclic octic unramified extension.
For determining the correct unit it suffices to determine the
signature of
$$ \alpha = A + B \sqrt{\mu} = u\sqrt{2} + (r+s\sqrt{2}\,) \sqrt{\mu}. $$
If we choose $u > 0$, then the relative norm of $\alpha$ is positive:
$$ \alpha \alpha' =  A^2 - \mu B^2 = \mu' C^2. $$
With $\beta = -u\sqrt{2} + (r-s\sqrt{2}\,) \sqrt{\mu'}$
we have $\beta, \beta' < 0$. 
Thus the signature of $\alpha$ is $(++--)$, where the first two embeddings
correspond to the positive square root of $2$. Multiplication
by $\pm 1 - \sqrt{2}$ provides us with a totally negative element
$\mu = \alpha(1 - \sqrt{2}\,)$, and if we multiply by $\pm 1 + \sqrt{2}$
we get a totally positive element. But since
$-1 + \sqrt{2} = \frac{1+\sqrt{2}}{(1+\sqrt{2}\,)^2}$, both elements
give rise to the same extension.

Uising the congruence
\begin{align*}
  \Big(\frac{1 + \sqrt{e+f\sqrt{2}}}{\sqrt{2}}\Big)^2
  & \equiv \begin{cases}
    2 + \sqrt{2} + \sqrt{e+f\sqrt{2}} \bmod 4
                 & \text{ if } f \equiv 2 \bmod 8, \\
    2 - \sqrt{2} + \sqrt{e+f\sqrt{2}} \bmod 4
                 & \text{ if } f \equiv 6 \bmod 8,
  \end{cases}
\end{align*}
it can be verified that the elements whose square roots generate
the unramified octic extensions actually are congruent to a square
modulo $4$.

\begin{table}[ht!]
  $$ \begin{array}{r|c|c|c}    
    \rsp  p  & \mu & \Cl(k_\gen) & \Cl(k^1) \\ \hline
    \rsp 113 & \big[\,5 \sqrt{2} + (1 - \sqrt{2}\,)\sqrt{11 + 2\sqrt{2}}\,\big]
    (1 + \sqrt{2}\,)  & [4] & [1] \\
    \rsp 1201 & \big[37\sqrt{2} + (7 -3\sqrt{2}\,) \sqrt{43 + 18\sqrt{2}}\,\big]
    (1 + \sqrt{2}\,) & [4] & [1] \\
    \rsp 1217 & \big[19\sqrt{2} + (3 -\sqrt{2}\,) \sqrt{35+2\sqrt{2}}\,\big] 
    (1 + \sqrt{2}\,) & [4] & [1] \\  
    \rsp 1601 & \big[7\sqrt{2} + (1 - \sqrt{2}\,) \sqrt{67+38\sqrt{2}}\,\big]
    (1 + \sqrt{2}\,) & [28] & [7]   \\
    \rsp 1777 & \big[47\sqrt{2} + (5+3\sqrt{2}\,)\sqrt{43+6\sqrt{2}}\,\big]
    (1 + \sqrt{2}\,) & [4] & [1] \\
    \rsp 2113 & \big[ 7\sqrt{2} + (1-\sqrt{2}\,) \sqrt{99+62\sqrt{2}}\big]
    (1 + \sqrt{2}\,) & [4] & [3,3] \\
  \end{array} $$
  \caption{Totally real unramified cyclic octic extensions}\label{Tab3}
\end{table}

\begin{table}[ht!]
  $$ \begin{array}{r|c|c|c}    
    \rsp  p  & \mu & \Cl(k_\gen) & \Cl(k^{+\,1}) \\ \hline
    \rsp 257 & \big[11\sqrt{2} + (3 - \sqrt{2}\,)\sqrt{35 + 22\sqrt{2}}\,\big]
    (1 - \sqrt{2}\,) & [6] & [3] \\
    \rsp 337 & \big[ 5\sqrt{2} + (1 - \sqrt{2}\,)\sqrt{27 + 14\sqrt{2}}\,\big]
    (1 - \sqrt{2}\,) & [2] & [17,17] \\
   \rsp 353 & \big[ 7\sqrt{2} + (1 - \sqrt{2}\,)\sqrt{19 + 2\sqrt{2}}\, \big]
    (1 - \sqrt{2}\,) & [2] & [17,17] \\
   \rsp 577 &  \big(13\sqrt{2} + (3 - \sqrt{2}\,)\sqrt{35 + 18\sqrt{2}}\,\big]
    (1 - \sqrt{2}\,) & [14] & [31, 31, 7] \\
   \rsp 593 & \big[ 5\sqrt{2} + (1 - \sqrt{2}\,) \sqrt{59 + 38\sqrt{2}}\,\big]
    (1 - \sqrt{2}\,) & [2] & [31, 31] \\
   \rsp 881 & \big[23\sqrt{2} + (5 - 3\sqrt{2}\,) \sqrt{43+22\sqrt{2}}\,\big]
   (1 - \sqrt{2}\,) & [2] &  [41, 41] \\
   \rsp 1153 & \big[9\sqrt{2} + (1  -\sqrt{2}\,) \sqrt{35 + 6\sqrt{2}}\,\big]
   (1 - \sqrt{2}\,) & [2] &  [7, 7] \\
    \rsp 1249 & \big[7\sqrt{2} + (1 - \sqrt{2}\,) \sqrt{51+26\sqrt{2}}\,\big]
    (1 - \sqrt{2}\,) & [2] & [71, 71] \\    
    \rsp 1553 & \big[35\sqrt{2} + (7 -3\sqrt{2}\,) \sqrt{91+58\sqrt{2}}\,\big]
    (1 - \sqrt{2}\,) & [2] & [7, 7]
 \end{array} $$
  \caption{Totally complex unramified octic cyclic extensions}\label{Tab4}
\end{table}

\begin{thm}
  Let $p \equiv 1 \bmod 8$ be a prime with $p = e^2 - 2f^2$, where
  $e \equiv 3 \bmod 8$ and $f \equiv 2 \bmod 4$. Then the equation
  \begin{equation}\label{E2p1}
    eu^2 = t^2 + 2ps^2
  \end{equation}
  is solvable in nonzero integers. There is a solution for which we can
  Choose the sign of $s$ in such a way 
  that $t \equiv 2fs \bmod e$; let $g$ be the deominator of
  $r = \frac{t-2fs}{e}$ when written in lowest form. Then for
  $$ \mu = g\big(u\sqrt{2} + (r+s\sqrt{2}\,) \sqrt{\mu}\,\big)\eps, $$
  where $\eps = 1 \pm \sqrt{2}$ and
  $K = \Q(\sqrt{2},\sqrt{p}\,)$, the extensions $K(\sqrt{\mu}\,)$
  are cyclic quartic extension unramified outside $2 \infty$.
  If we choose
  $$ \eps  = \begin{cases}
    1 + \sqrt{2} & \text{ if } h \equiv 0 \bmod 8, \\
    1 - \sqrt{2} & \text{ if } h \equiv 4 \bmod 8, \end{cases} $$
  then $K(\sqrt{\mu \eps}\,)$ is the cyclic octic extension of
  $\Q(\sqrt{2p}\,)$ unramified at all finite primes.
\end{thm}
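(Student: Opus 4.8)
The plan is to obtain the statement by combining three ingredients: the solvability of the norm equation, the standard construction that attaches to a $C_4$-factorization a cyclic octic extension unramified outside $2\infty$, and a separate analysis of the ramification at $2$ and at the infinite places that fixes the unit $\eps$. Throughout write $\alpha=e+f\sqrt2\in\Z[\sqrt2]$ and $\alpha'=e-f\sqrt2$, so that $\alpha\alpha'=e^2-2f^2=p$ and, by the computation preceding (\ref{EqC4}), $[\alpha/\alpha']=+1$; thus $p=\alpha\alpha'$ is a $C_4$-factorization and $K:=k_\gen(\sqrt\alpha)$ is the cyclic quartic extension of $k=\Q(\sqrt{2p}\,)$ unramified at all finite primes, over which $K(\sqrt\mu\,)$ is formed in the statement (here $k_\gen=\Q(\sqrt2,\sqrt p\,)$).

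First I would dispose of the Diophantine assertions. Solvability of $eu^2=t^2+2ps^2$ in nonzero integers is the Lemma proved above; since an even $t$ forces $s$ and $u$ even, cancelling powers of $2$ gives a solution with $s,t,u$ odd. From $p=e^2-2f^2$ one gets $eu^2=t^2+2e^2-4f^2s^2$, hence $t^2\equiv(2fs)^2\bmod e$; as the equation is even in $s$, replace $s$ by $-s$ if necessary so that $\gcd(t-2fs,e)$ is as large as possible, and let $g$ be the resulting denominator of $r=(t-2fs)/e$ in lowest terms, so $g\mid e$ ($g=1$ for $e$ prime, but $g$ may exceed $1$ for composite $e$, e.g.\ $p=2593$). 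The triple $(A,B,C)=(u\sqrt2,\,r+s\sqrt2,\,r-s\sqrt2)$ solves $u^2=er^2+4frs+2es^2$, so $g(A,B,C)$ solves (\ref{EqC4}) over $\Z[\sqrt2]$, and $\mu_0:=g(A+B\sqrt\alpha)=gu\sqrt2+(gr+gs\sqrt2)\sqrt\alpha$ is an algebraic integer of $K$; this is the element $g(u\sqrt2+(r+s\sqrt2)\sqrt\alpha)$ of the statement.

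Next the structural claim: for either $\eps\in\{1+\sqrt2,\,1-\sqrt2\}$ the field $K(\sqrt{\mu_0\eps}\,)/k$ is cyclic of degree $8$ and unramified outside $2\infty$. Here I would quote the well-known construction recalled in the text (\cite{LemD,LTh}): because $[\alpha/\alpha']=+1$, the solution of $A^2-\alpha B^2=\alpha'C^2$ makes $\mu_0$ satisfy the cocycle relation forcing $\Gal(K(\sqrt{\mu_0}\,)/k)\cong C_8$, and by the standard argument using $A^2-\alpha B^2=\alpha'C^2$ (with $\alpha,\alpha'$ coprime, $\alpha\alpha'=p$, and $g\mid e$ prime to $p$) the extension is unramified at every finite prime outside those above $2$, a property unaffected by twisting $\mu_0$ by a unit of $\Q(\sqrt2\,)$. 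That a suitable unit twist removes the ramification at $2$ as well is the content of \cite{LTh} (see also \cite[Cor.\ 5.3]{LemDir}); the displayed congruence for $\big((1+\sqrt\alpha)/\sqrt2\big)^2\bmod4$ then serves to check directly that $\mu_0\eps$ is a square modulo $4$ for the unit we finally select. As this part is borrowed, I do not expect it to be the obstacle.

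The new point, and the step I expect to be the main obstacle, is deciding between $1+\sqrt2$ and $1-\sqrt2$ by matching the archimedean type of the target field. Taking $u>0$, the signature computation in the text (using that the conjugate $-u\sqrt2+(r-s\sqrt2\,)\sqrt{\alpha'}$ and its partner are negative while the relative norm $A^2-\alpha B^2=\alpha'C^2$ is positive) shows that $\mu_0$ is positive at the real embeddings restricting to $\sqrt2\mapsto+\sqrt2$ on $\Q(\sqrt2\,)$ and negative at those restricting to $\sqrt2\mapsto-\sqrt2$. Since $1+\sqrt2>0>1-\sqrt2$ on $\Q(\sqrt2\,)$, the element $\mu_0(1+\sqrt2)$ is totally positive and $\mu_0(1-\sqrt2)$ totally negative, so $K(\sqrt{\mu_0(1+\sqrt2)}\,)$ is totally real and $K(\sqrt{\mu_0(1-\sqrt2)}\,)$ is totally complex, both unramified at all finite primes (and distinct, as $\sqrt{-1}\notin K$). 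To finish I would invoke Proposition~\ref{Pr01}: the hypotheses $e>0$, $e\equiv3\bmod8$ imply $p\equiv1\bmod16$, and when $h\equiv4\bmod8$ they force $N\eps_{2p}=+1$, hence $h^+=2h\equiv8\bmod16$, so the narrow Hilbert $2$-class field is cyclic octic; lying properly above the (always totally real) ordinary Hilbert class field, it is totally complex, which selects $\eps=1-\sqrt2$. When $h\equiv0\bmod8$ the cyclic octic extension unramified at all finite primes lies inside the totally real ordinary Hilbert $2$-class field, hence is totally real, and $\eps=1+\sqrt2$. In either case $K(\sqrt{\mu_0\eps}\,)$ is a cyclic octic extension of $k$ unramified at all finite primes, and therefore equals the unique such extension, as asserted. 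The substance is thus the signature bookkeeping together with the input of Proposition~\ref{Pr01}; the remaining algebraic steps are routine or borrowed.
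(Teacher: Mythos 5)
Your overall route is the same as the paper's (the paper gives no separate proof environment for this theorem; the preceding lemma, the reduction of (\ref{EqC4}) to (\ref{EDZ1}), the signature computation and the appeal to \cite{LTh} \emph{are} its proof, and you assemble exactly these pieces). The Diophantine part, the introduction of the denominator $g$, and the case analysis via Proposition~\ref{Pr01} (the hypotheses force $p\equiv1\bmod 16$ and $(\tfrac2p)_4=+1$, so $h^+\equiv0\bmod 8$, and $h\equiv4\bmod 8$ forces $N\eps_{2p}=+1$, $h^+=2h$) are all correct and match the text.

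There is, however, one genuine flaw in the final step. You assert that $K(\sqrt{\mu_0(1+\sqrt2)}\,)$ and $K(\sqrt{\mu_0(1-\sqrt2)}\,)$ are \emph{both} unramified at all finite primes, and you then use this to conclude that the twist with the matching signature is the desired field. That intermediate claim is false: the two fields differ by $K(\sqrt{(1+\sqrt2)(1-\sqrt2)}\,)=K(\sqrt{-1}\,)$, and since the completion of $K$ at a prime above $2$ is $\Q_2(\sqrt2)$ times an unramified extension while $\Q_2(\zeta_8)/\Q_2(\sqrt2)$ is ramified, $K(i)/K$ ramifies at $2$; so at most one of your two twists can be unramified at $2$. (If both were unramified at all finite primes you would get two distinct cyclic octic subfields of the narrow Hilbert class field of $k$, contradicting the cyclicity of its narrow $2$-class group.) As written, you therefore never actually establish that the \emph{selected} twist is unramified at $2$. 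The argument is repairable with ingredients you already have: the result quoted from \cite{LTh} guarantees that exactly one class in $\{\pm1,\pm(1+\sqrt2)\}$ (mod squares, and $1-\sqrt2\equiv-(1+\sqrt2)$ there) yields an extension unramified at all finite primes; that extension, being canonically contained in the narrow Hilbert class field, is Galois over $\Q$ and hence totally real or totally complex, never of mixed signature; Proposition~\ref{Pr01} dictates which of the two it is according to $h\bmod 8$; and your signature computation shows that the unique totally positive (resp.\ totally negative) candidate is $\mu_0(1+\sqrt2)$ (resp.\ $\mu_0(1-\sqrt2)$). Run in this direction the identification is forced, and it agrees with the paper's direct verification via the congruence for $\bigl((1+\sqrt{e+f\sqrt2}\,)/\sqrt2\bigr)^2\bmod 4$, which you correctly flag as an alternative check.
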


\end{document}